\newtheorem{thm}{Theorem}[section]
\newtheorem{lem}[thm]{Lemma}
\theoremstyle{definition}
\DeclareMathOperator{\td}{td}
\title{Smooth versus symplectic circle actions}
\author{Łukasz Bąk}
\address{Jagiellonian University, Kraków}
\email{lukasz.bak@im.uj.edu.pl}
\begin{document}

\begin{abstract}
We construct a $6$-manifold $M$ which admits a smooth circle action and a symplectic form $\omega$ such that if $\omega'$ is another symplectic form on $M$ equivalent to $\omega$, then $(M,\omega')$ does not admit a symplectic circle action. 
\end{abstract}

\maketitle

\section{Introduction}\label{intro}

In this short note we are concerned with a question whether a manifold admitting a symplectic form and a smooth circle action has to admit a structure of a symplectic $\mathbb{S}^{1}$-manifold.

For a symplectic surface the answer is clearly positive. In case of symplectic $4$-manifolds the answer is positive if the action has a fixed point \cite{Bal04} and unknown in general. Little research has been conducted in dimensions $6$ and above.

The main result of this paper is the following theorem.

\begin{thm}\label{main}
There exists a symplectic $6$-manifold $(M,\omega)$ such that the underlying smooth manifold $M$ admits a non-trivial smooth circle action, but for any symplectic form $\omega'$ equivalent to $\omega$ the symplectic manifold $(M,\omega')$ admits no non-trivial symplectic circle actions.
\end{thm}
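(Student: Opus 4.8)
The plan is to exclude Hamiltonian and non-Hamiltonian symplectic circle actions separately, using invariants that stay constant across the equivalence class of $\omega$. For a symplectic action with generating field $X$ the form $\iota_X\omega$ is closed, and the action is Hamiltonian exactly when $[\iota_X\omega]=0\in H^1(M;\mathbb R)$. So first I would arrange $b_1(M)=0$: then $H^1(M;\mathbb R)=0$ and every symplectic circle action on $(M,\omega')$ is forced to be Hamiltonian, for any symplectic form $\omega'$ whatsoever. Since $b_1$ is a topological invariant, this reduction is insensitive to the choice of form, and the whole problem collapses to excluding Hamiltonian actions.

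Next I would set up a Todd-genus obstruction. The key lemma is that a compact symplectic manifold with a Hamiltonian $S^{1}$-action satisfies $\td[M]=\td[F]$, where $F$ is the minimal fixed component of the moment map $\mu$ --- the holomorphic localization/rigidity statement for the Todd class, with $\mu$ furnishing the sign of the normal weights along $F$. As $\mu$ is a perfect Morse--Bott function whose critical manifolds all have even index, $b_1(F)=b_1(M)=0$, and $F$ is a closed symplectic manifold of dimension $0$, $2$ or $4$. In each case $\td[F]\ge 1$: a point or $2$-sphere gives $\td=1$, and a symplectic $4$-manifold with $b_1=0$ gives $\td=(1+b_2^+)/2\ge 1$ since $b_2^+\ge 1$. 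Hence a $6$-manifold with $b_1=0$ and $\td[M]\le 0$ carries no Hamiltonian, and so no symplectic, circle action. Because $\td[M]=\tfrac1{24}c_1c_2[M]$ depends only on the homotopy class of an $\omega$-compatible almost complex structure, it is unchanged under deformation of $\omega$; this is what propagates the conclusion to every $\omega'$ equivalent to $\omega$ and, since orientation reversal flips the sign of $\td$, why orientation-reversing forms must be left out.

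It then remains to construct a symplectic $6$-manifold with $b_1=0$, $\td\le 0$, and a non-trivial smooth circle action. Here I would decouple the action from the form: choose a smooth $6$-manifold that carries an explicit circle action --- say a smooth $\mathbb{CP}^1$- or $S^2$-fibration over a simply connected base, or a free action presenting $M$ as a principal bundle --- and then equip it with a symplectic form whose first Chern class is \emph{not} the one adapted to the action, tuned so that $c_1c_2[M]\le 0$ while $b_1=0$. Symplectic fiber-sum surgery along codimension-two symplectic submanifolds, or the known failure of Chern numbers to be smooth invariants, should provide such forms; the smooth action persists because it lives on the underlying manifold and need not preserve $\omega$.

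The hard part, I expect, is exactly this construction, because its two demands conflict: a circle action with simple fixed data tends to force $\td[M]\ge 1$ --- the very mechanism that obstructs Hamiltonian actions also makes ``$\td\le 0$ plus an action'' delicate --- so the smooth action must be chosen with fixed-point data compatible with a negative Todd genus while the symplectic form is kept independent of it. Checking all of $b_1=0$, $c_1c_2[M]\le 0$, and the survival of a genuine non-trivial smooth $S^{1}$-action through the surgery is the crux; in addition, the localization lemma $\td[M]=\td[F]$, though essentially standard, needs care in the Morse--Bott case where the fixed set is not isolated.
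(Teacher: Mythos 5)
Your obstruction half is essentially sound, and it shadows the paper's Lemma 2.1: the localization identity $\td(M,\omega)=\td(M_{\phi},\omega|_{M_{\phi}})$ for the minimal fixed component of the Hamiltonian, the case analysis over $\dim M_{\phi}\in\{0,2,4\}$ using $b_{2}^{+}\geq 1$ in the $4$-dimensional case, and the invariance of $\td$ under equivalence of forms are all exactly the paper's ingredients (your version is the special case $b_{1}(M)=0$ of the paper's inequality $\td(M,\omega)\geq 1-b_{1}(M)/2$). Your mechanism for forcing symplectic actions to be Hamiltonian, namely $H^{1}(M;\mathbb{R})=0$ so that $[\iota_{X}\omega]=0$ automatically, is also correct and is a legitimate alternative to the paper's mechanism.

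The genuine gap is that you never produce the manifold, and --- worse --- your reduction to $b_{1}(M)=0$ actively blocks the natural construction. With $b_{1}=0$ you need a simply connected symplectic $6$-manifold with $\td\leq 0$ carrying a non-trivial smooth circle action; but the one cheap source of negative Todd genus in dimension $6$ is a product with a surface $F_{g}$ of genus $g\geq 2$, which forces $b_{1}=2g\neq 0$ and is therefore unavailable to you. Your proposed substitutes (fiber sums, ``failure of Chern numbers to be smooth invariants'') are gestures, not a construction, and you concede as much; whether your stronger demands can be met simultaneously is not at all clear. The paper resolves precisely this tension by proving the inequality $\td(M,\omega)\geq 1-b_{1}(M)/2$ for \emph{arbitrary} Hamiltonian $6$-manifolds, so $b_{1}$ need not vanish: it takes $M=(K\sharp\overline{\mathbb{C}P^{2}})\times F_{g}$ with $g>1$, whose product form has $\td(M,\omega)=2(1-g)<1-g$, violating the inequality; the smooth circle action comes from the identification of $K\sharp\overline{\mathbb{C}P^{2}}$ with $3\mathbb{C}P^{2}\sharp 20\overline{\mathbb{C}P^{2}}$ and equivariant connected sums of linear actions; and Hamiltonian-ness of every symplectic action, for \emph{every} symplectic form on $M$, is forced not by $b_{1}=0$ but by $\chi(M)=50(1-g)\neq 0$ (fixed points exist) combined with the weak Lefschetz property, which a K\"{u}nneth computation verifies for all symplectic forms on this product. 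As written, your proposal establishes only the obstruction, not the theorem; filling your gap would require either a genuinely new simply connected example or retreating to the paper's weaker, $b_{1}$-dependent inequality.
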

We say that two symplectic forms $\omega$ and $\omega'$ on a manifold $M$ are equivalent if there exists a sequence $(\omega,\omega_{1},\ldots,\omega_{k},\omega')$ of symplectic forms on $M$ such that each two consecutive forms in this sequence are either deformation equivalent or one of them is a pull-back of the other by a self-diffeomorphism of $M$.

Unfortunately, Theorem \ref{main} does not answer the question stated above. It is known that manifolds in general may admit non-equivalent symplectic forms (cf.~\cite{Smi00} for dimension $4$ and \cite{Rua94} for dimension $6$).

The proof of Theorem \ref{main} above is divided into two steps. First, in Section \ref{sectodd} below, we prove an inequality for Todd genus of arbitrary Hamiltonian $6$-manifolds. Then, in Section \ref{seccon}, we construct a symplectic $6$-manifold with a non-trivial smooth circle action, which violates this inequality and such that any symplectic circle action on this manifold has to be Hamiltonian. Since Todd genus of a symplectic manifold is invariant with respect to equivalence of symplectic forms, these two results combined constitute a proof of Theorem \ref{main}.

\section{Todd genus of Hamiltonian $6$-manifolds}\label{sectodd}

In this section we will prove the following lemma.

\begin{lem}\label{sec2main}
If $(M,\omega)$ is a Hamiltonian $6$-manifold, then
\[\td(M,\omega)\geq{1-\frac{b_{1}(M)}{2}},\]
where $b_{k}$ denotes the $k$th Betti number of $M$.
\end{lem}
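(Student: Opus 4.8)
The plan is to reduce both sides of the inequality to the minimal fixed-point component of the action and then verify a low-dimensional inequality there. Fix an $\omega$-compatible almost complex structure $J$ and average it over $\mathbb{S}^1$ so that the (compact, connected) action preserves $J$; let $H\colon M\to\mathbb{R}$ be the moment map. By the standard Frankel--Atiyah theory, $H$ is a Morse--Bott function whose critical set is the fixed-point set $M^{\mathbb{S}^1}=\bigsqcup_i F_i$, each $F_i$ a closed symplectic submanifold, and the Morse--Bott index $\lambda_i$ of $F_i$ equals twice the number of negative weights of the $\mathbb{S}^1$-representation on its normal bundle; in particular every $\lambda_i$ is even. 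Moreover $H$ is a perfect Morse--Bott function, and by Atiyah's connectedness theorem its minimum is attained on a single connected component $F_{\min}$, which is the unique component of index $0$.

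Two reductions then drive the proof. First, comparing coefficients of $t^{1}$ in the Morse--Bott identity $P_t(M)=\sum_i t^{\lambda_i}P_t(F_i)$ gives $b_1(M)=\sum_i b_{1-\lambda_i}(F_i)$; since every $\lambda_i$ is even and $F_{\min}$ is the only component with $\lambda_i=0$, all terms vanish except $i=\min$, so $b_1(M)=b_1(F_{\min})$. Second, I claim $\td(M)=\td(F_{\min})$. Here I would use that $\td$ is the index of the $\mathrm{Spin}^{c}$-Dirac (equivalently, Dolbeault-type) operator, which is rigid under the $J$-preserving $\mathbb{S}^1$-action; hence the equivariant Todd genus is independent of the equivariant variable $u$. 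Writing out the Atiyah--Bott--Berline--Vergne localization formula, each $F_i$ contributes $\int_{F_i}\td(TF_i)\prod_k\bigl(1-e^{-x_k}u^{m_k}\bigr)^{-1}$, where the $m_k$ are the normal weights; because the total is constant in $u$, I may let $u\to 0$, which kills every factor coming from a negative weight, so only components with no negative weights---that is, $F_{\min}$---survive, leaving $\td(F_{\min})$.

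It remains to prove $\td(F)\ge 1-\tfrac{1}{2}b_1(F)$ for the symplectic manifold $F=F_{\min}$, which (for a non-trivial action) has dimension $0$, $2$ or $4$. In dimension $0$ the component is a point with $\td=1$ and $b_1=0$, and in dimension $2$ it is a surface of genus $g$ with $\td=1-g$ and $b_1=2g$, so both cases give equality. In dimension $4$ the almost complex identities $c_2[F]=\chi$ and $c_1^2[F]=2\chi+3\sigma$ yield
\[\td(F)=\tfrac{1}{12}\bigl(c_1^2+c_2\bigr)[F]=\tfrac{1}{4}(\chi+\sigma)=\tfrac{1}{2}\bigl(1-b_1(F)+b_2^{+}(F)\bigr),\]
and since $F$ carries a symplectic form its intersection form is positive on $[\omega|_F]$, so $b_2^{+}(F)\ge 1$; this gives exactly $\td(F)\ge 1-\tfrac{1}{2}b_1(F)$. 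Combining with $\td(M)=\td(F_{\min})$ and $b_1(M)=b_1(F_{\min})$ finishes the proof.

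I expect the main obstacle to be the identity $\td(M)=\td(F_{\min})$: the Morse-theoretic reduction of $b_1$ is routine and the final inequality is elementary, but establishing that only the minimum contributes requires the rigidity of the Todd genus together with a careful treatment of the localization formula for possibly non-isolated fixed components.
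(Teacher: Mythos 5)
Your proof is correct, and its overall skeleton --- reduce both $\td$ and $b_{1}$ to the minimal fixed component $F_{\min}$, then run the dimension $0/2/4$ case analysis --- is exactly the paper's; the case analysis itself is identical, down to the use of $b_{2}^{+}\geq 1$ coming from the symplectic class in the four-dimensional case. Where you genuinely diverge is in the two reductions. For $b_{1}$, the paper quotes Li's theorem that $\pi_{1}(M_{\phi})\cong\pi_{1}(M)$ for the minimum $M_{\phi}$ of the Hamiltonian, a statement at the level of fundamental groups; you instead extract $b_{1}(M)=b_{1}(F_{\min})$ from perfection of the moment map as a Morse--Bott function (Frankel--Atiyah--Kirwan), together with evenness of the indices and uniqueness of the index-zero component. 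That is weaker information, but it is all the lemma needs, and it rests on older and more classical results than Li's. For the Todd genus, the paper simply cites the localization formula for the $\chi_{y}$-genus from the Hirzebruch--Berger--Jung reference and evaluates at $y=0$; you re-derive that formula from rigidity plus Atiyah--Singer/ABBV localization and the $u\to 0$ limit, which is the same underlying mathematics (indeed it is how the cited formula is proven), so you gain self-containedness at the cost of two points you partially gloss over: (i) one cannot literally ``average $J$ over $\mathbb{S}^{1}$'' (an average of almost complex structures need not be an almost complex structure); instead one averages a metric, or uses the standard polar-decomposition/contractibility argument, to produce an \emph{invariant} $\omega$-compatible $J$; (ii) rigidity holds for the Dolbeault-type operator attached to that invariant almost complex structure (equivalently, the canonical $\mathrm{Spin}^{c}$ structure), not for an arbitrary $\mathrm{Spin}^{c}$-Dirac operator, and it is the former whose index is $\td(M,\omega)$. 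With those caveats made precise, your argument goes through and is a legitimate alternative to the paper's proof.
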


By $\td(M,\omega)$ we denote the Todd genus of $(M,\omega)$, defined as the Todd genus of an almost complex manifold $(M,J)$, where $J$ is any almost complex structure on $M$ compatible with $\omega$. Since the space of all such almost complex structures on $M$ is contractible, this is in fact well-defined.

Fix any symplectic manifold $(M,\omega)$. For now we do not assume that $M$ is of dimension $6$. Fix a non-trivial Hamiltonian circle action on $M$ with Hamiltonian function $\phi$. It is well known that the critical manifolds of $\phi$, which correspond to the connected components of the fixed point set of the action, are symplectic submanifolds of $M$ and that exactly one of these submanifolds, which we will denote by $M_{\phi}$, corresponds to the minimum of the Hamiltonian. By studying $\phi$ as a Morse-Bott function, Li \cite{Li03} concluded that
\[\pi_{1}(M_{\phi})\cong\pi_{1}(M).\]
At the other hand, recall that the Todd genus is an evaluation of a Hirzebruch's $\chi_{y}$-genus at $y=0$. Combining this with the localization formula for $\chi_{y}$-genus of $\mathbb{S}^{1}$-manifolds \cite{BerHirJun92} it is not hard to obtain
\[\td(M_{\phi},\omega_{M_{\phi}})=\td(M,\omega).\]

\begin{proof}[Proof of Lemma \ref{sec2main}]
Let now assume that $\dim{M}=6$. As mentioned earlier, $M_{\phi}$ is a connected symplectic submanifold of $M$, so if the action is not trivial, then $\dim{M_{\phi}}\in\{0,2,4\}$. If $\dim{M_{\phi}}=0$, then $M_{\phi}$ is a point and
\[\td(M,\omega)=\td(\bullet)=1\geq{1-\frac{b_{1}(M)}{2}}.\]
If $\dim{M_{\phi}}=2$, then $M_{\phi}$ is an oriented surface $F_{g}$ of genus $g$ for some non-negative integer $g$ and $\pi_{1}(M)\cong\pi_{1}(F_{g})\cong\ast^{g}\mathbb{Z}^{2}$. In particular $b_{1}(M)=2g$. Now, the Todd genus of a surface does not depend on a choice of a symplectic (or almost complex) structure and equals $\td(F_{g})=1-g$. Thus
\[\td(M,\omega)=\td(F_{g})=1-g=1-\frac{b_{1}(M)}{2}.\]
Finally, if $\dim(M_{\phi})=4$ then $M_{\phi}$ is a symplectic $4$-manifold with $b_{1}(M_{\phi})=b_{1}(M)$. Similarly to the case of surfaces, the Todd genus of a $4$-manifold does not depend on the choice of an almost complex structure and equals $\td(M_{\phi})=\frac{1-b_{1}(M_{\phi})+b_{2}^{+}(M_{\phi})}{2}$, where $b_{2}^{+}(M_{\phi})$ is the dimension of the self-dual part of $H^{2}(M_{\phi};\mathbb{R})$. But $(M_{\phi},\omega|_{M_{\phi}})$ is symplectic, so $[\omega|_{M_{\phi}}]\in{H^{2}(M_{\phi};\mathbb{R})}$ is self-dual and $b_{2}^{+}(M_{\phi})\geq{1}$. In particular
\[\td(M,\omega)=\td(M_{\phi})\geq{1-\frac{b_{1}(M_{\phi})}{2}}=1-\frac{b_{1}(M)}{2}.\]
\end{proof}

\section{Construction of an example}\label{seccon}

Let $K$ denote the $K3$ surface. It is a symplectic manifold with Euler characteristic $\chi(K)=24$, signature $\sigma(K)=-16$, Todd genus $\td(K)=2$ and even intersection form. If we take a one point blow up $K\sharp\overline{\mathbb{C}P^{2}}$ of $K$, then we obtain a new symplectic manifold, now with $\chi(K\sharp\overline{\mathbb{C}P^{2}})=25$, $\sigma(K\sharp\overline{\mathbb{C}P^{2}})=-17$, $\td(K\sharp\overline{\mathbb{C}P^{2}})=2$ and odd intersection form. By Serre's classification of indefinite unimodular bilinear symmetric forms we see, that the intersection form of $K\sharp\overline{\mathbb{C}P^{2}}$ coincides with that of $3\mathbb{C}P^{2}\sharp20\overline{\mathbb{C}P^{2}}$, so they are homotopy equivalent. Fix an integer $g>1$. If $F_{g}$ is an oriented surface of genus $g$, then
\[M:=(K\sharp\overline{\mathbb{C}P^{2}})\times{F_{g}}\cong(3\mathbb{C}P^{2}\sharp20\overline{\mathbb{C}P^{2}})\times{F_{g}}.\]

\begin{lem}\label{Mham}
Let $\omega$ be a non-trivial symplectic form on $M$. Every symplectic circle action on $(M,\omega)$ is Hamiltonian.
\end{lem}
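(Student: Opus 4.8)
The plan is to show that a symplectic circle action on $(M,\omega)$ cannot be non-Hamiltonian, and the standard tool for this is a cohomological criterion. A symplectic circle action is generated by a vector field $X$ with $\iota_X\omega$ a closed $1$-form; the action is Hamiltonian precisely when this $1$-form is exact, i.e.\ when the cohomology class $[\iota_X\omega]\in H^1(M;\mathbb{R})$ vanishes. So the goal reduces to showing that for the manifold $M=(K\sharp\overline{\mathbb{C}P^2})\times F_g$ this class must be zero for any symplectic form and any symplectic circle action. Since $M$ has $b_1(M)=2g>0$, the obstruction group $H^1(M;\mathbb{R})$ is non-trivial, so this is not automatic and requires a genuine argument.

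First I would invoke the structure theory of symplectic (non-Hamiltonian) circle actions. If the action is non-Hamiltonian, then $[\iota_X\omega]\neq 0$, and one knows that the action has no fixed points (a symplectic $\mathbb{S}^1$-action has a fixed point if and only if it is Hamiltonian, by the standard moment-map argument). A free or locally-free symplectic circle action with a nowhere-vanishing generating vector field gives rise to a fibration-like decomposition, and in particular forces strong constraints: the Euler characteristic $\chi(M)$ must vanish. The key numerical input here is that $\chi(M)=\chi(K\sharp\overline{\mathbb{C}P^2})\cdot\chi(F_g)=25\cdot(2-2g)\neq 0$ since $g>1$, so the action cannot be fixed-point-free. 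Therefore the action has a fixed point, and hence must be Hamiltonian.

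More carefully, I would argue as follows. Suppose the action is symplectic but not Hamiltonian. By a theorem going back to the Atiyah--Bott and Frankel localization philosophy, a circle action with fixed points that preserves $\omega$ admits a moment map and is Hamiltonian; contrapositively, a non-Hamiltonian symplectic circle action is fixed-point-free. Any fixed-point-free circle action on a closed manifold has vanishing Euler characteristic, because $\chi(M)$ equals the Euler characteristic of the fixed-point set by the classical result on torus actions. Since $\chi(M)=25(2-2g)\neq 0$, no fixed-point-free action can exist beyond the trivial one, so every non-trivial symplectic circle action has a fixed point and is therefore Hamiltonian.

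The main obstacle I anticipate is establishing rigorously the implication ``symplectic action with a fixed point $\Rightarrow$ Hamiltonian'' in the generality needed, and ensuring the Euler-characteristic computation is unaffected by the choice of symplectic form $\omega$ (it is a purely topological invariant of the underlying smooth manifold $M$, so it does not depend on $\omega$, which is what makes the argument robust). One must also verify that the fixed-point set of a symplectic action consists of symplectic submanifolds so that the localization giving $\chi(M)=\chi(M^{\mathbb{S}^1})$ applies cleanly, but this is standard. The crux is thus the nonvanishing $\chi(M)\neq 0$, which is arranged precisely by the choice $g>1$, and the topological invariance of $\chi$ makes the conclusion hold for every symplectic form on $M$.
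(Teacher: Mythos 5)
Your proposal contains a genuine gap, and it sits exactly at the step you yourself flag as ``the main obstacle.'' The implication you rely on --- that a symplectic circle action with a fixed point is automatically Hamiltonian --- is \emph{false} for general symplectic manifolds in dimension $6$ and above. McDuff constructed a closed symplectic $6$-manifold with a symplectic circle action that has non-empty fixed point set but is not Hamiltonian (J.~Geom.~Phys.~5, 1988). The converse direction (Hamiltonian $\Rightarrow$ fixed points, since a Hamiltonian function on a compact manifold has critical points) is true, but that is not the direction you need. The ``Frankel localization philosophy'' you invoke is a theorem about K\"ahler manifolds, and its proof uses the hard Lefschetz property of K\"ahler metrics; it does not transfer to arbitrary symplectic forms, which is precisely why McDuff's counterexample can exist. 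So your argument, as written, proves nothing beyond the existence of fixed points.

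Your first step --- $\chi(M)=25(2-2g)\neq 0$ forces fixed points --- agrees with the paper's first step. What the paper does next, and what your proposal is missing, is to verify that $(M,\omega)$ satisfies the \emph{weak Lefschetz property}: the map $\wedge[\omega]^{2}\colon H^{1}(M;\mathbb{R})\to H^{5}(M;\mathbb{R})$ is an isomorphism. Under this hypothesis (and only under some such hypothesis) the criterion ``fixed points $\Rightarrow$ Hamiltonian'' holds, by \cite[Theorem 5.5]{McDSal98}. The verification is where the product structure of $M$ actually enters beyond the Euler characteristic: by K\"unneth, $[\omega]=1\otimes b+a\otimes 1$ with $a\in H^{2}(K\sharp\overline{\mathbb{C}P^{2}};\mathbb{R})$, $b\in H^{2}(F_{g};\mathbb{R})$, every class in $H^{1}(M;\mathbb{R})$ has the form $1\otimes c$, and
\[(1\otimes c)\wedge[\omega]^{2}=a^{2}\otimes c,\]
so WLP holds precisely when $a^{2}\neq 0$; this follows from $0\neq[\omega]^{3}=a^{2}\otimes b$, which uses non-degeneracy of $\omega$. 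Note this argument must be run for an \emph{arbitrary} symplectic form $\omega$ on $M$, not just the product form, which the K\"unneth decomposition handles. Without this Lefschetz-type input your proof cannot be repaired, since the statement you assumed is simply not true in dimension~$6$.
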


\begin{proof}
Since $\chi(M)=50(1-g)\neq{0}$, every circle action on $M$ has fixed points. Recall, that a symplectic $2n$-manifold $(X,\omega_{X})$ is said to have the weak Lefschetz property (WLP) if $\wedge[\omega_{X}]^{n-1}:H^{1}(X;\mathbb{R})\to{H^{2n-1}(X;\mathbb{R})}$ is an isomorphism. If a~symplectic manifold $(X,\omega_{X})$ has WLP, then a symplectic circle action on $(X,\omega_{X})$ is Hamiltonian if and only if it has fixed points (see \cite[Theorem 5.5]{McDSal98}). So it remains to see that $(M,\omega)$ has WLP.

Using K\"{u}nneth's formula we obtain decompositions
\begin{align*}
H^{1}(M;\mathbb{R})&\cong{H^{0}(K\sharp\overline{\mathbb{C}P^{2}};\mathbb{R})}\otimes{H^{1}(F_{g};\mathbb{R})},\\
H^{2}(M;\mathbb{R})&\cong{H^{0}(K\sharp\overline{\mathbb{C}P^{2}};\mathbb{R})}\otimes{H^{2}(F_{g};\mathbb{R})}\oplus{H^{2}(K\sharp\overline{\mathbb{C}P^{2}};\mathbb{R})}\otimes{H^{0}(F_{g};\mathbb{R})},\\
H^{5}(M;\mathbb{R})&\cong{H^{4}(K\sharp\overline{\mathbb{C}P^{2}};\mathbb{R})}\otimes{H^{1}(F_{g};\mathbb{R})}.
\end{align*}
In particular, $[\omega]$ decomposes as $[\omega]=1\otimes{b}+a\otimes{1}$ for some $a\in{H^{2}(K\sharp\overline{\mathbb{C}P^{2}};\mathbb{R})}$ and $b\in{H^{2}(F_{g};\mathbb{R})}$. Take any class $\gamma=1\otimes{c}\in{H^{1}(M;\mathbb{R})}$, where $c\in{H^{1}(F_{g};\mathbb{R})}$. We have
\[\gamma\wedge[\omega]^{2}=a^{2}\otimes{c}.\]
$\wedge[\omega]^{2}:H^{1}(M;\mathbb{R})\to{H^{5}(M;\mathbb{R})}$ is an isomorphism precisely when $a^{2}\neq{0}$. But $0\neq[\omega]^{3}=a^{2}\otimes{b}$.
\end{proof}

We can easily endow $3\mathbb{C}P^{2}\sharp20\overline{\mathbb{C}P^{2}}$ with a non-trivial smooth circle action by taking appropriate linear actions on each component and using equivariant connected sum. Product action on $M$ is non-trivial. Hence, we have shown the following.

\begin{lem}\label{Msymm}
$M$ admits a non-trivial smooth circle action.
\end{lem}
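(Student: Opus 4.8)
The plan is to build a non-trivial smooth circle action on the $4$-manifold $N:=3\mathbb{C}P^{2}\sharp20\overline{\mathbb{C}P^{2}}$ and then let $\mathbb{S}^{1}$ act on $M=N\times F_{g}$ by $t\cdot(x,y)=(t\cdot x,y)$, that is, by the product of this action with the trivial action on $F_{g}$. Since the action on $N$ will be non-trivial, so will the product action on $M$, and we never need $F_{g}$ to carry an action (indeed, for $g>1$ it admits none). Thus everything reduces to producing a non-trivial smooth circle action on $N$.

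On each summand I would use a standard linear action. Letting $\mathbb{S}^{1}\subset U(3)$ act on $\mathbb{C}P^{2}$ by $t\cdot[z_{0}:z_{1}:z_{2}]=[z_{0}:tz_{1}:t^{2}z_{2}]$, the fixed point set is the three points $[1:0:0]$, $[0:1:0]$, $[0:0:1]$, and by the slice theorem the action is linear in suitable equivariant charts around each of them, with isotropy weights $(1,2)$, $(-1,1)$ and $(-2,-1)$ respectively. On a $\overline{\mathbb{C}P^{2}}$ summand I would use the same underlying action but read it in charts for the \emph{conjugate} complex structure, which is the one compatible with the orientation of $\overline{\mathbb{C}P^{2}}$; passing to these charts negates all the weights. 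The manifold $N$ is then assembled by iterated equivariant connected sum at fixed points.

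Concretely, to connect-sum two summands $X_{1},X_{2}$ equivariantly at fixed points $p_{1},p_{2}$, I would fix equivariant charts $\iota_{i}\colon(\mathbb{R}^{4},\rho_{i})\to X_{i}$ that are orientation-preserving for the orientation of $X_{i}$, delete the images of small balls, and glue the resulting collars by the radial inversion $j(x)=x/|x|^{2}$. The map $j$ is orientation-reversing and commutes with every orthogonal transformation, so the glued action is well defined exactly when the two local representations agree as oriented representations, $\rho_{1}\cong\rho_{2}$; since $j$ is orientation-reversing while both $\iota_{i}$ preserve orientation, the output is the correctly oriented connected sum $X_{1}\sharp X_{2}$ carrying a smooth $\mathbb{S}^{1}$-action. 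The point requiring care — and the main obstacle — is precisely this matching of oriented local models, because for nonzero weights there is no orientation-reversing equivariant self-map of $(\mathbb{R}^{4},\rho)$. I must therefore choose the fixed points (and, on the $\overline{\mathbb{C}P^{2}}$ factors, the conjugate charts) so that the isotropy representations literally coincide. For the action above this is arranged by pairing a fixed point of weights $(a,b)$ with one whose effective chart-weights are again $(a,b)$: two like-oriented summands are glued at fixed points of equal weights, e.g.\ both at $[1:0:0]$ with weights $(1,2)$, while $[1:0:0]$ of a $\mathbb{C}P^{2}$, of weights $(1,2)$, is matched with the point $[0:0:1]$ of a $\overline{\mathbb{C}P^{2}}$, whose conjugate chart has weights $(2,1)$. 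Arranging the $23$ summands in a chain, so that each uses at most two of its three fixed points, and performing this bookkeeping edge by edge yields the action on $N$.

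Finally, the resulting action on $N$ is non-trivial, since near any fixed point inherited from a summand it agrees with a non-trivial linear action. Consequently $t\cdot(x,y)=(t\cdot x,y)$ is a non-trivial smooth circle action on $M$, which is the assertion of the lemma.
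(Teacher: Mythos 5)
Your overall route is the same as the paper's: put linear circle actions on the summands of $N=3\mathbb{C}P^{2}\sharp20\overline{\mathbb{C}P^{2}}$, assemble them by equivariant connected sum at fixed points, and take the product with the trivial action on $F_{g}$ (the reduction to $N$ and the non-triviality argument are fine). But the step you yourself single out as the main obstacle --- the matching rule for gluing at fixed points --- is stated with the orientation backwards, and with your choice of one and the same action on every summand the construction cannot be completed. If both charts $\iota_{i}$ preserve the orientation of $X_{i}$ and the collars are glued by the inversion $j$, then, precisely because $j$ is orientation-reversing, the orientations of $X_{1}$ and $X_{2}$ do \emph{not} fit together on the overlap: the oriented manifold you build this way is $X_{1}\sharp\overline{X_{2}}$, not $X_{1}\sharp X_{2}$. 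The correct condition for the oriented equivariant sum $X_{1}\sharp X_{2}$ is that the tangent representations at $p_{1}$ and $p_{2}$ be isomorphic by an orientation-\emph{reversing} equivariant map; in terms of weights, $(a,b)$ at $p_{1}$ must meet a point whose weights are equivalent to $(-a,b)$, the equivalence being swap and simultaneous sign change. A sanity check that this is the right sign: equivariant blow-up, which realizes $X\sharp\overline{\mathbb{C}P^{2}}$, glues the $\overline{\mathbb{C}P^{2}}$ at a point whose \emph{complex} weights equal those of the blown-up fixed point --- allowed by the reversed-orientation criterion, forbidden by yours.

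By your own (correct) observation that a representation with nonzero weights admits no orientation-reversing equivariant self-map, the two criteria are mutually exclusive, so each of your like-for-like gluings (e.g.\ two copies of $\mathbb{C}P^{2}$, both at $[1:0:0]$ with weights $(1,2)$) actually produces $\mathbb{C}P^{2}\sharp\overline{\mathbb{C}P^{2}}$ rather than $\mathbb{C}P^{2}\sharp\mathbb{C}P^{2}$. Worse, with the single action $[z_{0}:tz_{1}:t^{2}z_{2}]$ on every summand the correct criterion can never be satisfied by two like-oriented summands: the available classes of fixed-point representations are $[(1,2)]=[(-2,-1)]$ and $[(-1,1)]$, and neither is the orientation-reversal of the other or of itself. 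Since each fixed point can serve at most one gluing, the three $\mathbb{C}P^{2}$'s can absorb at most nine of the $22$ gluings needed, so some $\overline{\mathbb{C}P^{2}}$--$\overline{\mathbb{C}P^{2}}$ adjacency is unavoidable and your bookkeeping cannot be completed. (Your mixed gluing does come out right, but only because two sign slips cancel: the ``conjugate chart'' negating both weights is orientation-\emph{preserving} --- in real dimension four $-J$ induces the same orientation as $J$; indeed $\overline{\mathbb{C}P^{2}}$ carries no almost complex structure at all.) The repair is exactly what the paper's word ``appropriate'' is hiding: vary the linear action from summand to summand. If the next gluing site on the manifold built so far has weights $(a,b)$, act on the next $\mathbb{C}P^{2}$ by $[z_{0}:t^{-a}z_{1}:t^{b}z_{2}]$, whose fixed point $[1:0:0]$ has weights $(-a,b)$, and on the next $\overline{\mathbb{C}P^{2}}$ by $[z_{0}:t^{a}z_{1}:t^{b}z_{2}]$; with this change every edge of your chain can be glued, and the rest of your argument goes through.
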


Finally, let $\omega_{K\sharp\overline{\mathbb{C}P^{2}}}$ be a symplectic form on $K\sharp\overline{\mathbb{C}P^{2}}$ and let $\omega_{F_{g}}$ be a symplectic form on $F_{g}$. Denote the product form on $M$ by $\omega$. Clearly, it is symplectic. Moreover, $\td(M,\omega)=\td(K\sharp\overline{\mathbb{C}P^{2}})\td(F_{g})=2(1-g)$. This gives us the following.

\begin{lem}\label{Msymp}
$M$ admits a symplectic form $\omega$ such that $\td(M,\omega)=2(1-g)$.
\end{lem}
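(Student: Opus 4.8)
The plan is to equip $M$ with the obvious product symplectic form and then compute its Todd genus via multiplicativity. First I would produce symplectic forms on the two factors. Since a symplectic blow-up of a symplectic manifold is again symplectic, the manifold $K\sharp\overline{\mathbb{C}P^{2}}$ carries a symplectic form $\omega_{K\sharp\overline{\mathbb{C}P^{2}}}$; and any area form on the closed oriented surface $F_{g}$ is symplectic, giving $\omega_{F_{g}}$. Writing $\pi_{1},\pi_{2}$ for the two projections from $M$, the form $\omega:=\pi_{1}^{*}\omega_{K\sharp\overline{\mathbb{C}P^{2}}}+\pi_{2}^{*}\omega_{F_{g}}$ is closed and, being a direct sum of nondegenerate forms on the two tangent summands, nondegenerate; hence $(M,\omega)$ is symplectic.

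To evaluate $\td(M,\omega)$ I would exhibit a convenient compatible almost complex structure. Choosing $J_{1}$ compatible with $\omega_{K\sharp\overline{\mathbb{C}P^{2}}}$ and $J_{2}$ compatible with $\omega_{F_{g}}$, the structure $J:=J_{1}\oplus J_{2}$ on $TM\cong\pi_{1}^{*}T(K\sharp\overline{\mathbb{C}P^{2}})\oplus\pi_{2}^{*}TF_{g}$ is compatible with $\omega$. By the well-definedness already noted (the space of compatible almost complex structures on $M$ is contractible), $\td(M,\omega)$ equals the Todd genus of $(M,J)$.

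The only real content is multiplicativity of the Todd genus, and this is where I would be careful. Because the Todd class is a multiplicative characteristic class and $T(M,J)$ splits as the external direct sum above, its total Todd class is the cross product of the Todd classes of the factors; evaluating the top-degree part against the fundamental class of the product and applying the Künneth isomorphism factors this as a product, so $\td(M,\omega)=\td(K\sharp\overline{\mathbb{C}P^{2}})\cdot\td(F_{g})$. Substituting the value $\td(K\sharp\overline{\mathbb{C}P^{2}})=2$ recorded above, together with the standard $\td(F_{g})=1-g$, yields $\td(M,\omega)=2(1-g)$, as claimed. No step presents a genuine obstacle; the main thing to get right is that the product almost complex structure is the one computing the Todd genus, so that multiplicativity applies directly.
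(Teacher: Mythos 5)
Your proposal is correct and follows essentially the same route as the paper: take the product of a symplectic form on $K\sharp\overline{\mathbb{C}P^{2}}$ with an area form on $F_{g}$, then compute $\td(M,\omega)=\td(K\sharp\overline{\mathbb{C}P^{2}})\cdot\td(F_{g})=2(1-g)$ by multiplicativity. The paper simply asserts the multiplicativity, whereas you justify it via the product compatible almost complex structure and the Künneth factorization of the Todd class; this is a welcome elaboration, not a different approach.
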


Lemmas \ref{sec2main}, \ref{Mham}, \ref{Msymm} and \ref{Msymp} complete the proof of Theorem \ref{main}, as outlined in Section \ref{intro}.

\bibliography{smvssm}
\bibliographystyle{plain}

\end{document}